\documentclass[11pt]{amsart}
\usepackage{amsmath}
\usepackage{amsfonts}
\usepackage{amssymb}



\newtheorem{theorem}{Theorem}[section]

\newtheorem{proposition}[theorem]{Proposition}
\theoremstyle{definition}
\newtheorem{definition}[theorem]{Definition}

\theoremstyle{remark}

\numberwithin{equation}{section}

\begin{document}

\title[Elliptic genus and modular differential equations]
{Elliptic genus and modular differential equations}
\author{Dmitrii Adler}
\address{Euler International Mathematical Institute}
\email{dmitry.v.adler@gmail.com}

\author{Valery Gritsenko}

\address{
Laboratoire Paul Painlev\'e, Universit\'e de Lille and 
International laboratory of mirror symmetry and automorphic forms
NRU HSE}

\subjclass[2010]{11F50, 17B69, 32W50, 58J26}
\email{valery.gritsenko@univ-lille.fr}
\date{\today}

\begin{abstract}
We study modular differential equations for the basic weak Jacobi forms 
in one abelian variable with applications to the elliptic genus of 
Calabi--Yau varieties. 
We show that  the  elliptic genus of any $CY_3$ satisfies a 
differential equation of degree one with respect to the heat operator. 
For a $K3$ surface or any $CY_5$ the degree of the differential 
equation is $3$. We prove that for a general $CY_4$ its elliptic genus   
satisfies a modular differential equation of degree $5$.    
We give examples of  differential equations of degree two with  
respect to the heat operator similar to the Kaneko--Zagier 
equation for modular forms in one variable. We find modular 
differential equations of  Kaneko--Zagier type of degree $2$ or $3$ for 
the second, third and fourth powers of the Jacobi theta-series. \end{abstract}
\maketitle

\section{Elliptic genus and Jacobi modular forms}

\subsection{Elliptic genus of complex varieties with $c_1=0$.}
\label{Sec:2.1}
Jacobi modular forms  appear in geometry and  physics  as special 
partition functions. For example, the elliptic genus of any complex 
compact   variety of dimension $d$ with trivial first Chern class is a 
weak Jacobi form  of weight 
$0$ and index $d/2$ with integral Fourier coefficients (see 
\cite{KYY} in the context of $N = 2$ superconformal field theory, 
\cite{G99a} in the context of Jacobi forms, \cite{T00} in the context 
of elliptic homology). 

Differential equations for modular forms in one variable 
$\tau\in \Bbb H$ were considered in the first half of the 20th century 
by Ramanujan and Rankin (see \cite{Z}).  In this paper we find  
differential equations for the elliptic genera of Calabi--Yau varieties  
of small dimensions. For a Jacobi form of index $d/2$ one has to consider the heat operator   
$
H^{(d)}=4\pi i d\frac{\partial\ }{\partial \tau}-
\frac{\partial^2 \  }{\partial z^2}
$
instead of the differentiation $\frac {d\ }{d\tau}$ 
where $\tau\in \Bbb H$ and $z\in \Bbb C$.
We prove  that  the  elliptic genus of any $CY_3$ satisfies a 
differential equation of degree one with respect to the heat operator.
We find a modular differential equation of degree $3$ in $H^{(1)}$ for 
a $K3$ surface and in $H^{(\frac 52)}$ for any $CY_5$. We prove that 
for a general $CY_4$ there exists a modular differential equation of 
degree $5$ with respect to the heat operator.

We note that modular differential equations are important  
in the description of  the characters of a conformal field theory.
For $N = 2$ superconformal field theories elliptic genera are a
natural generalization of the chiral characters (see \cite{GK}). 
Moreover elliptic genera can be computed geometrically in terms of 
Gromov--Witten invariants in dual string compactifications 
(see  recent papers \cite{OP}, \cite{IOP}).  
The target ring for the elliptic genus of Calabi--Yau varieties is the 
graded ring of the weak Jacobi forms $J_{0,*/2}^{\Bbb Z}$ of weight $0$
and half-integral index with integral Fourier coefficients.
In this paper we find modular differential equations for all generators 
of $J_{0,*/2}^{\Bbb Z}$. 
We find also  modular differential equations for the square,  
the cube, and the fourth power of the Jacobi theta-series $
\vartheta(\tau, z)$.
In \cite{AG}, we  found modular differential equations for  generators of the rings of the weak Jacobi forms for the root lattices $D_2<D_3< \ldots <D_8$. In these terms the subject of this paper is Jacobi forms for the simplest root lattice $A_1$. 
\medskip

Let $M=M_d$ be an (almost) complex compact manifold of (complex) 
dimension $d$ and  $T_X$ be its tangent bundle. 
Let $\tau\in \Bbb H$ be a variable in the upper half-plane and $z\in \Bbb C$.
We set  
$q=\exp(2\pi i \tau)$ and  $\zeta=\exp(2\pi i z)$. We define the formal series
$$
\Bbb {E}_{q,\zeta}=  \bigotimes_{n= 0}^{\infty}
{\bigwedge}_{-\zeta^{-1}q^{n}}T_M^*
\otimes 
 \bigotimes_{n= 1}^{\infty}{\bigwedge}_{-\zeta q^n}  T_M\otimes 
 \bigotimes_{n= 1}^{\infty} S_{q^n} T_M^*
\otimes 
 \bigotimes_{n= 1}^{\infty} S_{q^n} T_M, 
$$
where $\wedge^k$ is the $k^\text{th}$ exterior power, $S^k$ is the $k^\text{th}$ symmetric product, and
$$ 
{\bigwedge}_x E=\sum_{k\ge 0}  (\wedge^k E) x^k, \quad S_x E= \sum_{k\ge 0} (S^k E) x^k.
$$
The elliptic  genus (see \cite{KYY}, \cite{G99b}) 
of $M$ is a function of two variables $\tau\in \Bbb H$ and $z\in \Bbb C$
$$
\chi(M;\tau,z)= \zeta^{d/2}\int_{M}  
\operatorname{ch}(\Bbb E_{q,\zeta})\operatorname{td}(T_{M}),
$$
where  $\operatorname{td}$ is the Todd class, $\operatorname{ch}(\mathbb{E}_{q,\zeta})$
is the Chern  character applied to each  coefficient
of the formal power series, and $\int_M$ denotes the evaluation of 
the top degree differential form on the fundamental cycle of the manifold.
The coefficient $a(n,l)$ of the elliptic genus
$$
\chi(M;\tau,z)=\sum_{n\ge 0,\, l\in \Bbb Z}
a(n,l)\,q^n\zeta^l
$$
is equal to the index of the Dirac operator twisted with the vector bundle 
$E_{n,l-\frac d2}$, where 
$\Bbb {E}_{q,\zeta}=\oplus_{n,l}E_{n,l}q^n\zeta^{l}$.
The $q^0$-term of the elliptic genus of $M$ is equal to the Hirzebruch $\chi_y$-genus (with a small renormalization)
\begin{equation}\label{chiy}
\chi(M_d;\tau,z)=\sum_{p=0}^{d}(-1)^p\chi_p(M)\zeta^{d/2-p}+q(\ldots)
\end{equation}
where $\chi_p(M_d)=\sum_{q=0}^d(-1)^qh^{p,q}(M_d)$.
\begin{theorem}\label{Th:EG}
(See \cite{KYY}, \cite{G99a}.) If $M_d$ is a compact 
complex manifold of dimension $d$ with $c_1(M) = 0$ (over $\Bbb R$), 
then its elliptic genus $\chi(M_d; \tau, z)$ is a weak Jacobi form of 
weight 0 and index $\frac{d}2$ with integral Fourier coefficients.
\end{theorem}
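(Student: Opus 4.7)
The plan is to reduce the theorem to the known transformation properties of the Jacobi theta series $\vartheta(\tau,z)$ via a closed-form expression for $\chi(M_d;\tau,z)$ in terms of the Chern roots of $T_M$. Concretely, apply the splitting principle and formally write $T_M = L_1 \oplus \cdots \oplus L_d$ with Chern roots $x_j$. From
$$\operatorname{ch}\bigl(\textstyle\bigwedge_{y} E\bigr) = \prod_j (1 + y e^{-x_j}), \qquad \operatorname{ch}(S_y E) = \prod_j (1 - y e^{x_j})^{-1}, \qquad \operatorname{td}(T_M) = \prod_j \frac{x_j}{1 - e^{-x_j}},$$
direct substitution into the definition of $\chi(M;\tau,z)$ and reorganizing the infinite products over $n$ yields
$$\chi(M;\tau,z) = \int_M \prod_{j=1}^d \Phi(x_j;\tau,z), \qquad \Phi(x;\tau,z) = \frac{x\,\vartheta(\tau,z - x/(2\pi i))}{\vartheta(\tau,-x/(2\pi i))},$$
up to a global multiplicative constant involving $\eta(\tau)$, signs, and $\zeta^{d/2}$ which I would track carefully using the product formula for $\vartheta$. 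This closed-form expression is the workhorse, and all remaining assertions can be read off from it.

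Integrality of the Fourier coefficients $a(n,l)$ is then essentially automatic: by the Atiyah--Singer holomorphic Lefschetz formula each $a(n,l)$ equals the index of the Dirac operator twisted by the virtual holomorphic bundle $E_{n,l-d/2}$ extracted from the $q,\zeta$-expansion of $\Bbb E_{q,\zeta}$, hence an integer. The weak-form condition $a(n,l)=0$ for $n<0$ is likewise immediate, since $\Bbb E_{q,\zeta}$ is by construction a formal power series in $q$ with only non-negative exponents, and neither $\operatorname{ch}$ nor $\operatorname{td}$ introduces negative powers of $q$.

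The main technical content is the Jacobi transformation law. From the standard identities
$$\vartheta(-1/\tau,\,z/\tau) = -i\sqrt{\tau/i}\, e^{\pi i z^2/\tau}\,\vartheta(\tau,z), \qquad \vartheta(\tau, z + \lambda\tau + \mu) = (-1)^{\lambda + \mu} e^{-\pi i(\lambda^2 \tau + 2\lambda z)}\vartheta(\tau,z),$$
applied to each factor $\Phi(x_j;\tau,z)$ and multiplied over $j=1,\dots,d$, the accumulated automorphy factors decompose into two pieces: the expected weight-$0$, index-$d/2$ exponential $\exp\bigl(2\pi i\cdot (d/2)\cdot c z^2/(c\tau+d)\bigr)$ (respectively $\exp\bigl(-2\pi i(d/2)(\lambda^2\tau + 2\lambda z)\bigr)$), plus a residual anomaly of the form $\exp\bigl(\alpha(z,\tau)\cdot \sum_j x_j\bigr)$ with $\alpha$ depending linearly on the modular or lattice parameters. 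Because $\sum_j x_j$ represents $c_1(M)$ in $H^2(M;\Bbb R)$, the hypothesis $c_1(M)=0$ collapses the anomaly to $1$ inside $\int_M$, and one is left with exactly the Jacobi transformation law of weight $0$ and index $d/2$.

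The principal obstacle I anticipate is not any single conceptual step but the careful bookkeeping required to pin down the weight and index unambiguously: tracking the $\eta(\tau)$-factors, the half-integer powers of $\zeta$, and the sign conventions that distinguish the even and odd $d$ cases (the latter producing a genuinely half-integral-index Jacobi form whose elliptic transformation picks up $(-1)^d$ under $z\mapsto z+1$). All of these are organised by the single structural principle that every transformation anomaly is proportional to an elementary symmetric function of the $x_j$ whose degree-$2$ cohomological part is $c_1(M)$, which vanishes by assumption; this is where Theorem \ref{Th:EG} crucially uses the Calabi--Yau hypothesis.
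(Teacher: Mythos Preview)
The paper does not supply its own proof of this theorem: it is stated with the attribution ``(See \cite{KYY}, \cite{G99a})'' and is used throughout as a known background result, so there is no in-paper argument to compare against. Your outline is the standard one found in those references---express the integrand via the splitting principle as a product of factors $\Phi(x_j;\tau,z)$ built from $\vartheta$, read off the Jacobi transformation law from the theta transformation formulae, and observe that the residual automorphy anomaly is proportional to $\sum_j x_j = c_1(M)$ and hence vanishes---and it is correct in substance. The integrality via the twisted index and the weak-form bound $n\ge 0$ are handled exactly as you say.

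One small caution on your bookkeeping remark: the half-integral-index case is governed precisely by the character $v_H$ in Definition~\ref{def:JF}, so rather than tracking ad hoc signs you should verify that the product of the $(-1)^{\lambda+\mu}$ factors from each $\vartheta$ assembles into $v_H^{2m}=v_H^{d}$; this is automatic once you note that the numerator and denominator theta-factors in $\Phi(x_j;\tau,z)$ contribute the same sign under $z\mapsto z+\mu$ but opposite signs under $z\mapsto z+\lambda\tau$, leaving exactly $(-1)^{d\lambda}$ overall.
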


One can define $\chi(M;\tau, z)$  for any compact complex 
variety. It has nice automorphic property only if $c_1(M)=0$.
If $c_1(M)\ne 0$ then one can define an appropriate quasi-modular 
correction of $\chi(M;\tau, z)$  in order to obtain a generalized 
elliptic genus $\chi(M,E;\tau, z)$ for any vector bundle over $M$
(see \cite{G99b} and \cite{Gr20}).
This function is a meromorphic function of Jacobi type. In particular, one gets in this way  the elliptic genera of $(0,2)$-sigma models. 

Theorem \ref {Th:EG} means  that $\chi(M_d; \tau, z)$ satisfies two 
functional equations with respect to the modular group $SL_2(\Bbb Z)$ and the Heisenberg group $H(\Bbb Z)$. We note that 
the definition of $\chi(M_d;\tau,z)$ is a geometric realization of  
the Jacobi triple product formula for the anti-invariant Jacobi theta-series of characteristic two 
\begin{equation}\label{theta}
\begin{split}
\vartheta(\tau ,z)&=
q^{1/8}(\zeta^{1/2}-\zeta^{-1/2})\prod_{n\ge 1}\,(1-q^{n}\zeta)(1-q^n 
\zeta^{-1})(1-q^n)\\
{}&=q^{\frac{1}{8}}\zeta^{\frac{1}{2}}
\sum_{n\in \Bbb Z}\,(-1)^nq^{\frac{n(n+1)}{2}}\zeta^{n}.
\end{split}
\end{equation}
We note that $\vartheta(\tau, -z)=-\vartheta(\tau, z)$ and
$\vartheta(\tau, z)=-i\vartheta_{11}(z,\tau)$ in the notation of 
\cite[Chapter 1]{M}.
The theta-series  $\vartheta(\tau ,z)$ is a Jacobi modular form of 
index $\frac 12$ and it is one of the main instruments of our study.

\subsection{Definition of Jacobi forms}\label{Sec:1.2}
The  natural model of the Jacobi modular group $\Gamma^J(\Bbb Z)$ 
is the quotient  $\Gamma_{\infty}/\{\pm I\}$ of the integral maximal 
parabolic subgroup of the Siegel modular group of genus $2$ fixing an isotropic line. 
We refer to \cite[\S 1]{GN98} and \cite{CG} for this point of view and  for the definition below. The Jacobi group $\Gamma^J(\Bbb Z)$ is the semidirect product of $SL_2(\Bbb Z)$ with the integral Heisenberg group $H(\Bbb Z)$ which is the central extension of $\Bbb Z\times\Bbb Z$. ($H(\Bbb Z)$ is the unipotent subgroup of $\Gamma_\infty(\Bbb Z)$.) 
One can define a binary character of $H(\Bbb Z)$ 
$$
\upsilon_{H}([x,y;r])=(-1)^{x+y+xy+r}.
$$
\begin{definition}\label{def:JF}
Let $k\in \Bbb Z$ and $m\in \frac{1}{2}\Bbb N$. 
A holomorphic function
\newline 
$\varphi : \Bbb H\times \Bbb C\to \Bbb C$ is called a weak Jacobi 
form of weight $k$ and index $m$  if it satisfies  
\begin{align*}
\varphi\left(\frac{a\tau+b}{c\tau+d},\frac{z}{c\tau+d}\right)&
=(c\tau+d)^ke^{2\pi i m \frac{cz^2}{c\tau+d}}\,\varphi(\tau,z),\\
\varphi(\tau,z+x\tau+y)&=v_H([x,y;0])^{2m}e^{-2\pi i m (x^2\tau+2xz)}\varphi(\tau,z),
\end{align*}
for any $A=\left(\begin{smallmatrix}
a & b \\ 
c & d \end{smallmatrix}\right)\in SL_2(\Bbb Z)$ and
$x,y\in \Bbb Z$,
and has a Fourier expansion of the form
\begin{equation}\label{F-exp} 
\varphi(\tau,z)=\sum_{\substack{ n\geq 0,\ l\in \frac{1}{2}\Bbb Z}} a(n,l)\exp(2\pi i(n\tau+lz))
\end{equation}
where $l$ is half-integral for half-integral index $m$. 
If $\varphi$ satisfies the additional condition $a(n,l)=0$ for $4nm - l^2<0$ then it is called a {\it holomorphic} (at infinity) Jacobi form. Moreover, if $\varphi$ satisfies the stronger condition
$f(n,l)=0$ for $4nt - l^2\le0$ then it is called a Jacobi {\it cusp} form.
\end{definition}

In this paper we denote by $J_{k,m}$ the vector space of weak Jacobi 
forms of weight $k$ and index $m$. It is easy to see that 
$J_{k,0}=M_k(SL_2(\Bbb Z))$ is the space of classical modular forms 
with respect to the modular group.

We note that the Jacobi theta-series $\vartheta(\tau ,z)$ introduced 
above is a holomorphic Jacobi form of weight $\frac{1}2$ and index 
$\frac{1}2$ in $J_{\frac{1}2, \frac{1}2}(v_\eta^3\times v_H)$ where 
$v_\eta$ is the multiplier system of the Dedekind eta-function 
\begin{equation}\label{eta}
\eta(\tau)=q^{\frac 1{24}}\prod_{n\ge 1}(1-q^n)
\in M_{\frac{1}2}(SL_2(\Bbb Z),v_\eta).
\end{equation} 

According to \cite[Theorem 9.3]{EZ} the bigraded ring of the weak 
Jacobi forms of even weight and integral index is a polynomial ring 
with four generators 
\begin{equation}\label{J2**}
J_{2*,*}=\bigoplus_{k\in \Bbb Z,\, m\in \Bbb Z_\ge 0}  
J_{2k,m}=
\Bbb C[E_4, E_6, \varphi_{-2,1},\varphi_{0,1}],
\end{equation}
where 
$E_4(\tau)=1+240\sum_{n\ge 1}\sigma_3(n)q^n$ and 
$E_6(\tau)=1-504\sum_{n\ge 1}\sigma_5(n)q^n$
are the classical Eisenstein series of weight $4$ and $6$, and
\begin{equation}\label{phi-21}
\varphi_{-2,1}=\frac{\vartheta(\tau,z)^2}{\eta(\tau)^6}
=\zeta^{\pm 1}-2+ 
q(-2\zeta^{\pm 2} +8\zeta^{\pm 1}-12)+O(q^2),
\end{equation}
\begin{equation}\label{phi01}
\varphi_{0,1}=-\frac{3}{\pi^2}\wp(\tau,z)\varphi_{-2,1}=
\zeta^{\pm 1}+10+ q(10\zeta^{\pm 2} - 64\zeta^{\pm 1}+108)+O(q^2),
\end{equation}
where $\wp(\tau,z)$ is the Weierstrass $\wp$-function.  We note that 
\begin{equation*}\label{G2}
\frac{\partial\vartheta(\tau ,z)}{\partial z}\big|_{z=0}
=2\pi i \,\eta(\tau )^3,\quad
\dfrac{\partial^2 \log \vartheta(\tau,z)}{\partial z^2}=
-\wp(\tau,z)+8\pi^2 G_2(\tau)
\end{equation*}
where $G_2(\tau)=-\frac{1}{24}+\sum_{n\ge 1 }\sigma_1(n)q^n$
is the quasi-modular Eisenstein series.

For Jacobi forms of half-integral weights and indices and for the 
interpretation of $\vartheta(\tau,z)$ as a Jacobi modular form see 
\cite{GN98} and \cite{G99a}. (See also \cite{G94} and \cite{CG} for general case of  Jacobi forms for orthogonal groups.)
We have 
\begin{equation}\label{J032}
J_{-1, \frac{1}2}=\Bbb C\varphi_{-1,\frac{1}2},\quad 
J_{-1,2}=\Bbb C\varphi_{-1,2},\quad 
J_{0,\frac 32}=\Bbb C\varphi_{0,\frac 32}
\end{equation}
where 
\begin{equation}\label{phi32}
 \varphi_{-1,\frac{1}{2}}(\tau,z)=\frac{\vartheta(\tau,z)}
{\eta^3(\tau)}, \ \varphi_{-1,2}(\tau,z)=
\frac{\vartheta(\tau,2z)}{\eta^3(\tau)},\ 
\varphi_{0,\frac{3}{2}}(\tau,z)=\frac{\vartheta(\tau,2z)}
{\vartheta(\tau,z)}.
\end{equation}
Moreover the following relations are true for any $k\in \Bbb Z$ and 
$m\in \Bbb N$ 
\begin{equation}\label{eq:structure1}
J_{2k+1,m}=\phi_{-1,2}\cdot J_{2k+2,m-2},\quad
J_{2k+1,m+\frac{1}{2}}=\varphi_{-1,\frac{1}{2}}\cdot J_{2k+2,m},
\end{equation}
\begin{equation}\label{eq:structure2}
J_{2k,m+\frac{1}{2}}=\varphi_{0,\frac{3}{2}}\cdot J_{2k,m-1}.
\end{equation}
Since the functions $\varphi_{-1,2}$, $\varphi_{0,\frac{3}{2}}$ and 
$\varphi_{-1,\frac{1}{2}}$ have  infinite product expansions, the 
relations 
(\ref{eq:structure1}) and 
(\ref{eq:structure2}) hold for the corresponding spaces of Jacobi forms 
with integral Fourier coefficients.

According to Theorem \ref{Th:EG}
the target ring for the elliptic genus is the graded ring 
$J_{0,*/2}^{\Bbb Z}=\oplus_{m\in \Bbb N}\, J_{0,\frac{m}2}$.
To describe its structure we have to introduce three additional Jacobi 
forms of weight $0$ and index $2$, $3$ and $4$ with integral Fourier 
coefficients
\begin{equation}\label{phi2}
\begin{aligned}
\varphi_{0,2}(\tau,z)
&=\tfrac{\varphi_{0,1}^2-E_4\varphi_{-2,1}^2}{24}=
\zeta^{\pm 1}+4+q(\zeta^{\pm 3}-8\zeta^{\pm 2}-\zeta^{\pm 1}+16)+\ldots,\\
\varphi_{0,3}(\tau,z)&=\varphi_{0,\frac 32}(\tau,z)^2=
\zeta^{\pm 1}+2+2q(\zeta^{\pm 3}-\zeta^{\pm 2}+\zeta^{\pm 1}+2)+
\ldots,\\
\varphi_{0,4}(\tau,z)&=\frac{\vartheta(\tau, 3z)}{\vartheta(\tau, z)}=
\zeta^{\pm 1}+1
-q(\zeta^{\pm 4}+\zeta^{\pm 3}-\zeta^{\pm 1}-2)+\ldots .
\end{aligned}
\end{equation}

\begin{theorem}\label{thm:J0*}(See \cite{G99a}.)
1. $J^{\Bbb Z}_{0,*/2}=J^{\Bbb Z}_{0,*}[\varphi_{0, \frac 32}]$.

\noindent
2. The graded ring $J^{\Bbb Z}_{0,*}$ of the weak Jacobi forms of 
weight $0$ and integral index with integral Fourier coefficients is 
finitely generated over $\Bbb Z$. More precisely,
$$ 
J^{\Bbb Z}_{0,*}
= \Bbb Z\, [\varphi_{0,1},\varphi_{0,2},\varphi_{0,3},\varphi_{0,4}].
$$
The functions $\varphi_{0,1}$, $\varphi_{0,2}$, $\varphi_{0,3}$ are algebraically independent over $\Bbb C$ and 
$4\varphi_{0,4}=\varphi_{0,1}\varphi_{0,3}-\varphi_{0,2}^2$. 
\end{theorem}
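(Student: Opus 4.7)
The plan is to deduce the $\Bbb C$-structure of $J_{0,*}$ directly from the classical description (\ref{J2**}) of $J_{2*,*}$, then to lift generation to $\Bbb Z$ by a descent on Fourier coefficients. Part 1 is then a direct consequence of (\ref{eq:structure2}).

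\emph{Part 1.} The identity $J_{2k,m+1/2}=\varphi_{0,3/2}\cdot J_{2k,m-1}$ in (\ref{eq:structure2}) is already stated in the excerpt to hold on the integer sublattices (as a consequence of the infinite product for $\varphi_{0,3/2}$). Combined with $\varphi_{0,3/2}^2=\varphi_{0,3}\in J^{\Bbb Z}_{0,3}$ from (\ref{phi2}), this gives $J^{\Bbb Z}_{0,*/2}=J^{\Bbb Z}_{0,*}\oplus \varphi_{0,3/2}J^{\Bbb Z}_{0,*}=J^{\Bbb Z}_{0,*}[\varphi_{0,3/2}]$, subject to the integral quadratic relation $\varphi_{0,3/2}^2=\varphi_{0,3}$.

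\emph{Part 2 over $\Bbb C$.} A monomial $E_4^aE_6^b\varphi_{-2,1}^c\varphi_{0,1}^d$ in $J_{2*,*}$ has weight $4a+6b-2c$, so weight zero forces $c=2a+3b$, and the weight-zero subring is $\Bbb C[u,v,w]$ where $u=\varphi_{0,1}$, $v=E_4\varphi_{-2,1}^2$, $w=E_6\varphi_{-2,1}^3$ of indices $1,2,3$. These three are algebraically independent because the monomials $u^dv^aw^b=\varphi_{0,1}^dE_4^aE_6^b\varphi_{-2,1}^{2a+3b}$ are pairwise distinct in the algebraically independent generators $E_4,E_6,\varphi_{-2,1},\varphi_{0,1}$. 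From (\ref{phi2}), $v=\varphi_{0,1}^2-24\varphi_{0,2}$; writing $\varphi_{0,3}=\alpha u^3+\beta uv+\gamma w\in J_{0,3}$ one sees by a Fourier coefficient comparison that $\gamma\neq 0$, so $w$ is a $\Bbb Q$-polynomial in $\varphi_{0,1},\varphi_{0,2},\varphi_{0,3}$. Hence $J_{0,*}=\Bbb C[\varphi_{0,1},\varphi_{0,2},\varphi_{0,3}]$ with these three algebraically independent over $\Bbb C$. The identity $4\varphi_{0,4}=\varphi_{0,1}\varphi_{0,3}-\varphi_{0,2}^2$ then lives in the finite-dimensional space $J_{0,4}$ and is verified by matching Fourier coefficients up to the Sturm bound.

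\emph{The main obstacle: $\Bbb Z$-integrality.} The inclusion $\Bbb Z[\varphi_{0,1},\ldots,\varphi_{0,4}]\subseteq J^{\Bbb Z}_{0,*}$ is immediate from the integrality of each $q$-expansion. For the reverse inclusion I would induct on the index $m$, using the $q^0$-term as descent. The $q^0$-term of $\varphi\in J^{\Bbb Z}_{0,m}$ is a symmetric integer Laurent polynomial in $\zeta$ of $\zeta$-degree $\le m$, i.e.\ an element $P(x)\in\Bbb Z[x]_{\le m}$ with $x=\zeta+\zeta^{-1}$; the four generators have $q^0$-terms $x+10,x+4,x+2,x+1$, and the $q^0$-specialization of the main relation reads $4(x+1)=(x+10)(x+2)-(x+4)^2$, showing that $\varphi_{0,4}$ supplies exactly the $2$-adic content missing from $\Bbb Z[\varphi_{0,1},\varphi_{0,2},\varphi_{0,3}]|_{q=0}$. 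Analyzing the triangular structure of the coefficient matrix of $q^0$-terms of index-$m$ monomials shows that every $P(x)\in\Bbb Z[x]_{\le m}$ is realized; subtracting a matching integer polynomial reduces $\varphi$ to a weight-$0$ weak Jacobi form with vanishing $q^0$-term, and the induction is closed by iterating the argument on higher $q$-coefficients. A cleaner alternative is to construct an explicit $\Bbb Z$-basis of $J^{\Bbb Z}_{0,m}$ via the theta-quotient generators of (\ref{phi32})--(\ref{phi2}) and verify directly that each basis element is an integer polynomial in $\varphi_{0,1},\ldots,\varphi_{0,4}$.
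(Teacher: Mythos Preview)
The paper does not prove this theorem; it is stated with the parenthetical ``(See \cite{G99a})'' and used as input. So there is no in-paper proof to compare against, and your proposal is being measured against the original reference rather than anything in the present text.

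Your treatment of Part~1 and of Part~2 over $\Bbb C$ is fine and matches the standard route: extract the weight-zero subring of $\Bbb C[E_4,E_6,\varphi_{-2,1},\varphi_{0,1}]$ as $\Bbb C[\varphi_{0,1},E_4\varphi_{-2,1}^2,E_6\varphi_{-2,1}^3]$ and change basis.

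The $\Bbb Z$-integrality argument, however, has two genuine gaps. First, the assertion that ``every $P(x)\in\Bbb Z[x]_{\le m}$ is realized'' as the $q^0$-term of some element of $\Bbb Z[\varphi_{0,1},\dots,\varphi_{0,4}]$ is false already for $m=1$: there $J_{0,1}=\Bbb C\,\varphi_{0,1}$ and the $q^0$-image is $\Bbb Z(x+10)\subsetneq\Bbb Z[x]_{\le 1}$. What you actually need is that the $q^0$-image of $J_{0,m}^{\Bbb Z}$ coincides with the $q^0$-image of the $\Bbb Z$-span of monomials, and that is a different (and nontrivial) statement. Second, even granting that you can kill the $q^0$-term integrally, the phrase ``the induction is closed by iterating the argument on higher $q$-coefficients'' does not reduce the index $m$, and for $m\ge 8$ one has $\dim J_{0,m}>m+1$, so there exist nonzero forms in $J_{0,m}$ with vanishing $q^0$-term; you cannot conclude $\varphi=0$. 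Dividing by $\Delta$ lands you in $J_{-12,m}^{\Bbb Z}$, not in $J_{0,m'}^{\Bbb Z}$ for smaller $m'$, so the induction hypothesis does not apply directly. The argument in \cite{G99a} handles this by controlling $J_{k,*}^{\Bbb Z}$ for all negative even $k$ simultaneously (essentially proving $J_{2*,*}^{\Bbb Z}=\Bbb Z[E_4,E_6,\Delta,\varphi_{-2,1},\varphi_{0,1}]$ first), which is exactly the structure your ``cleaner alternative'' gestures at but does not carry out.
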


\section{Modular differential equations for the elliptic genus of Calabi--Yau varieties of dimension $2$, $3$ and $5$}

Modular differential operators appear naturally in the geometric and physics context of elliptic genera (see \cite{GK}, \cite{GT}, \cite{OP}).
\subsection{The modular differential operator $D_k$.}
Let
$$
D=q\frac{d\ }{dq}=\frac{1}{2\pi i}\frac{d\ }{d\tau}.
$$
For any 
$f(\tau)=\sum_{n\ge n_0\in \Bbb Z} a(n)q^n$ we have
$D(f)=\sum_{n\ge n_0\in \Bbb Z} na(n)q^n$.
Moreover, for any  meromorphic automorphic 
form $f(\tau)$ of weight $0$ we have
$$
D(f)\in M_2^{(mer)}(SL_2(\Bbb Z)).
$$
It gives the following modular differential operator
$$
D_k: M_k(SL_2(\Bbb Z))\to M_{k+2}(SL_2(\Bbb Z))
$$
\begin{equation}
D_k(f)=\eta^{2k}D(\frac f{\eta^{2k}})=D(f)-2k\frac{D(\eta)}{\eta}f=
D(f)-\frac{k}{12}E_2\cdot f
\end{equation}
where
\begin{equation}
\frac{D(\eta)}{\eta}=-G_2(\tau)=\frac{1}{24}-\sum_{n\ge 1}\sigma_1(n)q^n
=\frac{1}{24}E_2(\tau)
\end{equation}
is the quasi-modular Eisenstein series of weight $2$.  

Directly from the definition of $D_k$ we obtain
\begin{equation}\label{D-delta}
D_{\frac k 2}(\eta^k)=0,\quad
D_{4}(E_4)=-\frac{1}3E_6, \quad D_{6}(E_6)=-\frac{1}2E_4^2.  
\end{equation}
In particular, $D_{12}(\Delta(\tau))=0$ where 
$\Delta(\tau)=\eta^{24}(\tau)$ is the Ramanujan cusp form of weight 
$12$.

\subsection{The heat  operator and its modular correction.}

The analogue of the derivation $\frac{1}{2\pi i}\frac{d \ }{d\tau}$
in the case of Jacobi modular forms of index $m$ is the heat 
operator
\begin{equation}\label{heat}
{H}^{(m)}=\frac{3}{m}\frac 1{(2\pi i)^2}
\left( 8\pi i m\dfrac{\partial\ }{\partial \tau}-
\dfrac{\partial^2 \  }{\partial z^2}\right)
=12q\frac {d\ }{dq}- \frac{3}m\left(\zeta\frac {d\ }{d\zeta}\right)^2.
\end{equation} 
We have 
$$
{H}^{(m)}(q^n\zeta^l)=\frac 3{m}(4nm-l^2)q^n\zeta^l,
$$ 
where 
$4nm-l^2$ is the hyperbolic norm of the index $(n,l)$ of  the Fourier 
coefficient $a(n,l)$ (see (\ref{F-exp})).
This normalization of the heat operator with operator $D$ multiplied by 
12 is useful in the context of the differential 
equations. We firstly used it in \cite{AG}. 
To simplify the notation we write $H={H}^{(m)}$ in all cases when the 
exact index of Jacobi modular forms is clear.

Any holomorphic Jacobi forms of weight $\frac{1}2$
(this is the so-called {\it singular weight} for Jacobi modular forms 
of Eichler-Zagier type) lies in the kernel of the heat 
operator. In particular, $H^{(\frac 12)}(\vartheta(\tau,z))=0$. See 
\cite{G94} where the general case of Jacobi forms of orthogonal type 
was considered.
We know that the heat operator $H^{(m)}$ transforms a 
(meromorphic) Jacobi form of the singular weight 
(i.e. weight $\frac 12$ in our case) and index $m$ 
into a Jacobi form of the weight $\frac 12+2$ and the same index 
(see \cite[Lecture 11]{GrJ} for more details).
Similarly to the operator $D_k$ above one can define  the modular 
differential operator for Jacobi forms
\begin{equation}
H_{k}:J_{k,m}\to J_{k+2,m}, \quad
H_{k}(\varphi_{k,m})=H(\varphi_{k,m})
-\frac{(2k-1)}{2}E_2\cdot\varphi_{k,m}
\end{equation}
with the quasi-modular Eisenstein series 
$E_2(\tau)=1-24\sum_{n\ge 1}\sigma_1(n)q^n$. 
This differential operator transforms weak (holomorphic or cusp) Jacobi 
forms into Jacobi forms of the same type. 

\subsection{The elliptic genus of  varieties of dimension $2$, $3$ and 
$5$.}
There are three cases when the space $J_{0,m}$ is one-dimensional. They 
are  $m=1$, $\frac 32$ and $\frac 52$. 
According to Theorem \ref{Th:EG} and (\ref{chiy}) the elliptic 
genus of a Calabi--Yau variety of dimension $2$, $3$ or $5$ depends 
only on its Euler characteristic.

A Calabi--Yau variety of dimension $2$ is a $K3$ surface. Since 
$e(K3)=24$, we have
$$
\chi(K3; \tau, z)
=2\varphi_{0,1}(\tau, z)=2\zeta+20+2\zeta^{-1}+q(\ldots)
$$
where $\varphi_{0,1}\in J_{0,1}$ is one of the two generators of the 
bigraded ring of weak Jacobi forms defined in (\ref{phi01}). 
We note that the Jacobi form $\varphi_{0,1}$ is the elliptic genus of 
an Enriques  surface. For Calabi--Yau varieties of dimension $3$ and 
$5$
we have
\begin{equation}\label{EG:CY5}
\begin{aligned}
\chi(CY_3; \tau, z)
&=\frac {e(CY_3)}2\,\varphi_{0,\frac32}
=\frac {e(CY_3)}2\left(\zeta^{-\frac{1}2}+\zeta^{\frac{1}2}+q(\ldots)\right),\\
\chi(CY_5; \tau, z)&
=\frac{e(CY_5)}{24}\,\varphi_{0,\frac32}\cdot \varphi_{0,1}=
\frac {e(CY_5)}{24}\left(
\zeta^{\pm \frac 32}+11\zeta^{\pm \frac 12}+q(\ldots)\right)
\end{aligned}
\end{equation}
where $e(M_d)$ is the Euler number of $M_d$. The last identity shows 
that the Euler number of a Calabi--Yau variety of dimension $5$ is 
divisible by $24$ (see \cite{G99a}).

\begin{theorem}\label{deq:CY3}
Let $M_3$ be a Calabi--Yau variety of dimension $3$ with $e(M_3)\ne 0$.
Its elliptic genus $\chi(M_d; \tau, z)$ satisfies a modular 
differential equation of degree $1$ with respect to the heat operator 
$H$. More exactly
\begin{equation}\label{deq:CY3}
H(\varphi_{0,\frac 32})+\frac 12 E_2\cdot\varphi_{0,\frac 32}=0.
\end{equation} 
\end{theorem}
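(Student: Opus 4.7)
The plan is to recognize the identity (\ref{deq:CY3}) as the vanishing of a weight-$0$ modular differential operator applied to $\varphi_{0,\frac32}$, and then to deduce this vanishing from the fact that the target space of Jacobi forms is trivial.

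First, I would rewrite (\ref{deq:CY3}) as $H_0(\varphi_{0,\frac32})=0$. By the definition of the modular heat operator $H_k(\varphi)=H(\varphi)-\frac{2k-1}{2}E_2\cdot\varphi$, we have
\begin{equation*}
H_0(\varphi)=H(\varphi)+\tfrac12 E_2\cdot\varphi,
\end{equation*}
so the equation (\ref{deq:CY3}) is literally $H_0(\varphi_{0,\frac32})=0$. The operator $H_0$ lifts $J_{0,m}\to J_{2,m}$ (it preserves both the index and the Heisenberg multiplier), so the statement to prove is that the image lies in the zero space $J_{2,\frac32}$.

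Second, I would compute $J_{2,\frac32}=\{0\}$ using the structure relation (\ref{eq:structure2}) with $k=1$ and $m=1$:
\begin{equation*}
J_{2,\tfrac32}=\varphi_{0,\frac32}\cdot J_{2,0}=\varphi_{0,\frac32}\cdot M_2(SL_2(\mathbb{Z}))=\{0\},
\end{equation*}
since $M_2(SL_2(\mathbb{Z}))=0$ (no holomorphic weight-$2$ modular forms for the full modular group; $E_2$ is only quasi-modular). Combining the two steps, $H_0(\varphi_{0,\frac32})\in J_{2,\frac32}=\{0\}$, which is exactly (\ref{deq:CY3}). The differential equation for $\chi(M_3;\tau,z)$ then follows at once by linearity from the identity $\chi(M_3;\tau,z)=\frac{e(M_3)}{2}\varphi_{0,\frac32}$ in (\ref{EG:CY5}).

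There is essentially no substantive obstacle: the proof is a short dimension count once the modular correction $H_0$ is identified. The main conceptual ingredient is the modular covariance of $H_k$, recalled in Section 2.2. A direct computational alternative is available using $\varphi_{0,\frac32}=\vartheta(\tau,2z)/\vartheta(\tau,z)$ together with the heat equations $H^{(\frac12)}(\vartheta(\tau,z))=0$ and $H^{(2)}(\vartheta(\tau,2z))=0$ and the logarithmic identity $\partial_z^2\log\vartheta=-\wp+8\pi^2 G_2$, but it is longer and less illuminating than the structural argument above.
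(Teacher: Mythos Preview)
Your proof is correct and follows essentially the same route as the paper: identify the equation as $H_0(\varphi_{0,\frac32})=0$, note that $H_0:J_{0,\frac32}\to J_{2,\frac32}$, and conclude from $J_{2,\frac32}=\varphi_{0,\frac32}\cdot M_2(SL_2(\Bbb Z))=\{0\}$. The alternative you sketch via the heat equation for $\vartheta$ is also present in the paper as a remark following the proof (using the quintuple product formula to realize $\eta\,\varphi_{0,\frac32}$ as a singular-weight form).
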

\begin{proof} 
We note that $J_{2,\frac{3}2}=M_2(SL_2(\Bbb Z))\phi_{0, \frac{3}2}=\{0\}$. Therefore (\ref{deq:CY3}) follows directly from the equality
$H_{0}(\phi_{0,\frac{3}2})=0$.
\end{proof}

\noindent
{\bf Remark.} 
We can give another  explanation  of (\ref{deq:CY3}). 
According to the quintiple product formula 
(see \cite[Lemma 1.6]{GN98}), 
we have that
$$
\eta(\tau)\varphi_{0,\frac 32}(\tau,z)=
\eta(\tau)\frac{\vartheta(2z)}{\vartheta(z)}=
\sum_{n\in \Bbb Z}\left(\frac n{12}\right)q^{n^2/24}\zeta^{n/2}
\in J_{\frac{1}2, \frac 32}^{hol}(v_\eta\cdot v_H),
$$
where
$\left(\frac n{12}\right)=\pm 1$ or $0$ is the quadratic Legendre 
symbol, is a {\it holomorphic} Jacobi form of singular weight 
$\frac 12$. Therefore 
$$
H(\eta(\tau)\varphi_{0,\frac 32}(\tau,z))=0.
$$
Using the relation $\frac {12D(\eta)}{\eta}=\frac 1{2}E_2(\tau)$
we rewrite this identity as (\ref{deq:CY3}).
\smallskip

We formalize computation used in  the second proof of 
(\ref{deq:CY3}) above. 

\begin{proposition}\label{prop:DH}
Let $f_{k_1}(\tau)\in M_{k_1}(SL_2(\Bbb Z))$ and
$\varphi_{k_2,m}(\tau,z)\in J_{k_2,m}$ where $m\in \frac 12\Bbb Z$. 
We have
$$
H_{k_1+k_2}(f_{k_1}\varphi_{k_2,m})=
12D_{k_1}(f_{k_1})\varphi_{k_2,m}
+f_{k_1}H_{k_2}(\varphi_{k_2,m}).
$$
In particular,
$H_{\frac {n}2+k}(\eta^{n}\varphi_{k,m})=\eta^{n}H_{k}(\varphi_{k,m})$.
\end{proposition}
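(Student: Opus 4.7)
The plan is to reduce the identity to a direct calculation that separates the $\tau$-derivative and $z$-derivative parts, then to track the Eisenstein correction terms.

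First I would observe that since $f_{k_1}$ depends only on $\tau$, the operator $\zeta\frac{d}{d\zeta}$ annihilates it, so the Leibniz rule for the uncorrected heat operator collapses to a very simple form. Explicitly, writing
\begin{equation*}
H = 12D - \frac{3}{m}\left(\zeta\frac{d}{d\zeta}\right)^2,
\end{equation*}
and using $\partial_z f_{k_1} = 0$, one gets the clean identity
\begin{equation*}
H(f_{k_1}\varphi_{k_2,m}) = 12\,D(f_{k_1})\,\varphi_{k_2,m} + f_{k_1}\,H(\varphi_{k_2,m}).
\end{equation*}

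Next I would apply the definitions of the modular differential operators,
$H_k(\varphi) = H(\varphi) - \tfrac{2k-1}{2}E_2\varphi$ and $D_k(f) = D(f) - \tfrac{k}{12}E_2 f$,
expand both sides of the desired identity, and compare $E_2$-coefficients. On the right one obtains the correction $-\bigl(k_1 + \tfrac{2k_2-1}{2}\bigr)E_2 f_{k_1}\varphi_{k_2,m} = -\tfrac{2(k_1+k_2)-1}{2}E_2 f_{k_1}\varphi_{k_2,m}$, which matches exactly the correction produced by $H_{k_1+k_2}$ on the left. Combined with the Leibniz step this proves the main identity. No real obstacle here; the only thing to be careful about is the factor of $12$ in front of $D$ coming from our normalization of the heat operator, and the half-weight shift in $H_k$ stemming from the fact that $H$ raises weight by $2$ on Jacobi forms of singular weight $1/2$.

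For the particular case $f_{k_1} = \eta^n$, set $k_1 = n/2$. Then by (\ref{D-delta}) we have $D_{n/2}(\eta^n) = 0$, so the first term of the right-hand side vanishes and we obtain $H_{n/2+k}(\eta^n\varphi_{k,m}) = \eta^n H_k(\varphi_{k,m})$ immediately. This is the statement that multiplication by $\eta^n$ intertwines the modular heat operators of appropriately shifted weights, which is what makes the computation in the remark after Theorem~\ref{deq:CY3} work. The only real ``content'' of the proposition is therefore the Leibniz identity for $H$ plus bookkeeping of $E_2$-terms; I expect no genuine obstacle, just careful accounting of weights.
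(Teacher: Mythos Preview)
Your proof is correct. The paper actually gives no explicit proof of this proposition at all --- it merely introduces it with the sentence ``We formalize computation used in the second proof of (\ref{deq:CY3}) above'' and leaves the verification to the reader --- so your direct Leibniz computation for $H$ combined with matching the $E_2$-correction terms is exactly the routine check the authors have in mind.
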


The cases of Calabi--Yau varieties of dimensions two and five are more 
difficult, but quite similar.

\begin{theorem}\label{deq:dim1}
Let $M_d$ be a Calabi--Yau variety of dimension $2$ or $5$ and 
$e(M_5)\ne 0$. Then the elliptic genus $\chi(M_d; \tau, z)$ satisfies a 
modular differential equation of degree $3$ with respect to the heat 
operator $H$. More exactly we have
\begin{equation}\label{deq:K3}
H_4H_2H_0(\varphi_{0,1})-\frac{101}4E_4H_0(\varphi_{0,1})+10E_6\varphi_{0,1}=0,
\end{equation}
\begin{equation}\label{deq:CY5}
H_4H_2H_0(\varphi_{0,\frac 52})-\frac{611}{25}E_4H_0(\varphi_{0,\frac 52})+\frac{88}{25}E_6\varphi_{0,\frac 52}=0.
\end{equation}
We can rewrite the last two equations using only the heat operator and the Eisenstein series  $E_2$, $E_4$ and $E_6$:
\begin{multline*}
H^3(\varphi_{0,1})-\frac{9}{2}E_2H^2(\varphi_{0,1})
+\left(\frac{9}{4}E_2^2-\frac{99}{4}E_4\right)H(\varphi_{0,1})\\
+\left(\frac{3}{8}E_2^3-\frac{99}{8}E_2E_4+12E_6\right)\varphi_{0,1}=0
\end{multline*}
and
\begin{multline*}
H^3(\varphi_{0,\frac{5}{2}})-\frac{9}{2}E_2H^2(\varphi_{0,\frac{5}{2}})+\left(\frac{9}{4}E_2^2-\frac{1997}{50}E_4\right)H(\varphi_{0,\frac{5}{2}})\\
+\left(\frac{3}{8}E_2^3-\frac{1997}{100}E_2E_4+\frac{138}{25}E_6\right)\varphi_{0,\frac{5}{2}}=0.
\end{multline*}
By $H^3(\phi)$ we denote as usual the operator $H(H(H(\phi))$.
We note that we use the heat operator $H^{(1)}$  for  
$\varphi_{0,1}$ and the heat operator $H^{(\frac 52)}$ for 
$\varphi_{5/2}$ (see (\ref{heat})).
\end{theorem}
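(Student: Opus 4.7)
\bigskip

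\noindent
\textbf{Proof proposal.}
The plan is to prove both equations by the same dimension-counting argument, reducing the question to a finite linear-algebra problem that is solved by matching the first few Fourier coefficients.

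The key observation is that each iterate $H_{2j}\cdots H_2 H_0(\varphi_{0,m})$ is a weak Jacobi form in $J_{2(j+1),m}$. To locate a linear relation among $\varphi_{0,m}$, $H_0\varphi_{0,m}$, $H_2H_0\varphi_{0,m}$ and $H_4H_2H_0\varphi_{0,m}$ with coefficients in $M_*(SL_2(\Bbb Z))$, I would form the weight-$6$ expression
\[
\alpha\cdot H_4H_2H_0(\varphi_{0,m})+\beta(\tau)\cdot H_2H_0(\varphi_{0,m})+\gamma(\tau)\cdot H_0(\varphi_{0,m})+\delta(\tau)\cdot \varphi_{0,m}
\]
with $\alpha\in\Bbb C=M_0$, $\beta\in M_2(SL_2(\Bbb Z))$, $\gamma\in M_4(SL_2(\Bbb Z))$, $\delta\in M_6(SL_2(\Bbb Z))$. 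Since $M_2(SL_2(\Bbb Z))=\{0\}$, $M_4=\Bbb CE_4$, $M_6=\Bbb CE_6$, this provides three scalar parameters, while the whole expression lands in $J_{6,m}$.

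The next step is dimension-counting for $J_{k,m}$ with $m\in\{1,\tfrac52\}$. For $m=1$, using the polynomial generation (\ref{J2**}) and solving the weight/index equations in the monomials $E_4^aE_6^b\varphi_{-2,1}^c\varphi_{0,1}^d$, one finds
\[
\dim J_{0,1}=\dim J_{2,1}=1,\qquad \dim J_{4,1}=\dim J_{6,1}=2,
\]
with $J_{6,1}=\Bbb CE_4^2\varphi_{-2,1}\oplus\Bbb CE_6\varphi_{0,1}$. For $m=\tfrac52$, the structure identity (\ref{eq:structure2}) gives $J_{2k,5/2}=\varphi_{0,\frac32}\cdot J_{2k,1}$, hence the same dimensions. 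In both cases the target space $J_{6,m}$ is two-dimensional while we have three parameters, so a nontrivial relation exists and is unique up to a scalar.

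The final step is to pin down the numerical coefficients. I would compute the $q^0$ and $q^1$ Laurent expansions in $\zeta$ of $\varphi_{0,1}$, $H_0\varphi_{0,1}$, $H_2H_0\varphi_{0,1}$, $H_4H_2H_0\varphi_{0,1}$ directly from the definition of $H_k$ and the expansion (\ref{phi01}), then expand each element in the basis $\{E_4^2\varphi_{-2,1},E_6\varphi_{0,1}\}$ of $J_{6,1}$; solving the resulting $2\times 3$ linear system yields the ratios of $\alpha$, $E_4$-coefficient of $\gamma$, and $E_6$-coefficient of $\delta$, producing (\ref{deq:K3}). The argument for $\varphi_{0,\frac52}=\varphi_{0,\frac32}\varphi_{0,1}$ is identical, with Fourier coefficients computed using Proposition~\ref{prop:DH} applied to $f=\eta$ and the identity $H_0(\varphi_{0,\frac32})=-\tfrac12E_2\varphi_{0,\frac32}$ already established. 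The equivalent forms with $H$ instead of the $H_k$'s follow by substituting $H_k=H-\tfrac{2k-1}{2}E_2$ and expanding; the Ramanujan identity $D(E_2)=\tfrac1{12}(E_2^2-E_4)$ (equivalently $H^{(m)}(E_2)=E_2^2-E_4$, since $E_2$ has index zero and $H^{(m)}$ acts on $\tau$-dependent forms as $12\,q\tfrac{d}{dq}$) is used to rewrite $H(E_2\varphi)$ in the expansion.

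I expect the only real work to be bookkeeping in the last paragraph: choosing consistent normalizations for $H^{(1)}$ and $H^{(5/2)}$ and carrying the three leading Fourier coefficients through the chain of three heat operators without arithmetic slips. Conceptually the proof is a routine consequence of (\ref{J2**}), the vanishing of $M_2(SL_2(\Bbb Z))$, and the fact that $\dim J_{6,m}=2$ for $m=1,\tfrac52$; the only step that could fail is the nondegeneracy of the $2\times 3$ coefficient matrix, which one checks by observing that the computed relation has the nonzero leading coefficient $\alpha=1$.
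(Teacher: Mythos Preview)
Your proposal is correct and follows essentially the same approach as the paper: both arguments rest on the fact that $J_{6,m}$ is two-dimensional for $m=1,\tfrac52$ (equivalently, $J_{-6,m}=\{0\}$), so a linear combination of $H_4H_2H_0(\varphi_{0,m})$, $E_4H_0(\varphi_{0,m})$, and $E_6\varphi_{0,m}$ must vanish, with the coefficients determined by matching a few Fourier terms. The only noteworthy difference is bookkeeping: the paper organizes the computation as ``$q^0$-cancellation'' --- choose the two free constants so that the $q^0$-term of the weight-$6$ combination vanishes, then observe $\Delta^{-1}$ of the result lies in $J_{-6,m}=\{0\}$ --- which makes it explicit that the $q^0$-coefficients alone suffice and the $q^1$-data you mention is unnecessary here.
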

\begin{proof} 
Let $\varphi_{k,m}\in J_{k,m}$. By  $J_{k,m}(q)$ we denote the subspace 
of the  weak Jacobi forms in $J_{k,m}$ whose $q^0$-Fourier coefficient 
is equal to $0$.

According to (\ref{heat}), we have a 
simple formula for 
the $q^0$-Fourier coefficient of $H_k(\varphi_{k,m})$. If 
$q^0(\varphi_{k,m})=\sum_{l}\,a(0,l)\zeta^l$ then 
\begin{equation}\label{q0-coef}
q^0(H_k(\varphi_{k,m}))= 
\sum_{l}\,\left(-\frac{3l^2}m-\frac{2k-1}2\right)a(0,l)\zeta^l.
\end{equation}
We know that  $J_{0,1}=\Bbb C\varphi_{0,1}$ and  
$J_{2,1}=\Bbb C E_4\varphi_{-2,1}$. Since 
$q^0(H_2(\varphi_{-2,1}))=-\frac 12 \zeta^{\pm 1}+5$ and 
$q^0(H_0(\varphi_{0,1}))=-\frac 52\zeta^{\pm 1}+5$, we have
\begin{equation}\label{Hphi}
H_{-2}(\varphi_{-2,1})=-\frac 12\varphi_{0,1},\quad
H_{0}(\varphi_{0,1})=-\frac 52 E_4\varphi_{-2,1}.
\end{equation}
Using (\ref{D-delta}), (\ref{Hphi}) and Proposition \ref{prop:DH} we 
obtain
\begin{equation}\label{deq-2:phi1}
H_2H_0(\varphi_{0,1})=-\frac{5}2H_2(E_4\varphi_{-2,1})=
10E_6\varphi_{-2,1}+\frac 54 E_4\varphi_{0,1}.
\end{equation}
This equation shows that $\varphi_{0,1}$ {\it does not satisfies a 
modular differential equation of degree $2$ with respect to the heat 
operator} $H$. We will use the identity (\ref{deq-2:phi1}) in \S 3.

To obtain a differential equation of order $3$ for $\varphi_{0,1}$ one 
can continue the calculation above, but we propose a more simple method
of {\it $q^0$-cancellation}. 
Using (\ref{q0-coef}) we obtain
$$
q^0(H_4H_2H_0(\varphi_{0,1}))=
-\frac{13\cdot 9\cdot 5}{8}\zeta^{\pm 1}+\frac{21}8\cdot 10.
$$
It follows that the  $q^0$-Fourier coefficient of  the Jacobi form 
$\psi_{6,1}$ of weight $6$ in the left 
hand side of the differential equation (\ref{deq:K3}) is equal to $0$,
i.e., $\psi_{6,1}\in J_{6,1}(q)$. 
Therefore $\Delta^{-1}\psi_{6,1}\in J_{-6,1}$. The latter space is 
trivial according to the structure result (\ref{J2**}). Equation 
(\ref{deq:K3}) is proved.
\smallskip
 
Next we consider a variety $CY_5$ of dimension $5$. 
We know that $J_{2,\frac 52}=\Bbb C E_4\varphi_{-2,1}\varphi_{0,1}$. 
Using (\ref{EG:CY5}) and (\ref{q0-coef}) we obtain
$$
q^0(H_0(\varphi_{0,\frac 52}))=
-\frac{11}5(\zeta^{\pm \frac 32}-\zeta^{\pm \frac 12}),\quad
q^0(H_{-2}(\varphi_{-2,1}\varphi_{0,\frac 32}))=
-\frac{1}5(\zeta^{\pm \frac 32}+11\zeta^{\pm \frac 12}).
$$
Therefore
\begin{equation}\label{H0phi52}
H_0(\varphi_{0,\frac 52})=-\frac{11}5 E_4 \varphi_{-2,1}\varphi_{0,\frac 32},\quad
H_{-2}(\varphi_{-2,1}\varphi_{0,\frac 32})=-\frac{1}5\varphi_{0,1}\varphi_{0,\frac 32}.
\end{equation}
By Proposition \ref{prop:DH} and \ref{D-delta} we obtain
\begin{equation}
H_2H_0(\varphi_{0,\frac 52})=H_2\left(-\frac{11}5 E_4 \varphi_{-2,1}\varphi_{0,\frac 32}\right)=
\frac{44}5 E_6 \varphi_{-2,1}\varphi_{0,\frac 32}
+\frac{11}{25}E_4\varphi_{0,\frac 52}.
\end{equation}
It follows that $\varphi_{0,\frac 52}$ does not satisfy a modular 
differential equation of degree $2$ with respect to the heat operator 
$H$.  The third iteration $H_4H_2H_0(\varphi_{0,\frac 52})$ has weight 
$6$. Since $J_{6,\frac 52}=\phi_{0, \frac 32}J_{6,1}$ 
we can use the method of $q^0$-cancellation as in the case of $K3$ 
above. We have
$$
q^0(H_4H_2H_0(\varphi_{0,\frac 52}))=
-\frac{31\cdot21\cdot 11}{125}\zeta^{\pm \frac 32}+
\frac{19\cdot 9\cdot 1}{125}\cdot 11\zeta^{\pm \frac 12}.
$$
This proves  (\ref{deq:CY5}).
 
To rewrite  equations (\ref{deq:K3})
and (\ref{deq:CY5}) in terms of the heat operator we take into account
the identity $\frac{\partial}{\partial \tau}E_2=E_2^2-E_4$ and $\frac{\partial}{\partial \tau}E_4=4E_2E_4-4E_6$. The theorem is proved.
\end{proof}

\section{Modular differential equations of degree $2$} 

\subsection{The Kaneko--Zagier modular differential equation}
The Kaneko--Zagier differential equation
\begin{equation}\label{deq:KZ1}
f''(\tau)-\frac{k+1}6 E_2(\tau)f'(\tau)+\frac {k(k+1)}{12}
E'_2(\tau)f(\tau)=0
\end{equation}
appeared in \cite{KZ} (see also \cite{KK}) in connection with liftings 
of supersingular $j$-invariants of elliptic curves. 
For our purpose we use a representation of the Kaneko--Zagier equation 
in terms of the modular differential operator $D_k$. The equation (\ref{deq:KZ1}) 
is equivalent to 
\begin{equation}\label{deq:KZ2}
D_{k+2}D_k(f)-\frac {k(k+2)}{144}E_4\cdot f=0.
\end{equation}
It is evident that the Eisenstein series $E_4(\tau)$ of weight $k=4$
is a solution of the Kaneko--Zagier equation. This equation and its 
generalization for degree $3$ (see \cite{KNS})  have many applications 
in number theory and  the theory of vertex algebras.

If we change $D_k$ by the modular differential operator $H_{k}$ then  
the Eisenstein-Jacobi series $E_{4,1}(\tau, z)$
of weight $4$ and index $1$ satisfies a similar equation
$$
H_{6}H_4(E_{4,1}(\tau,z))-  \frac{77}4 E_4(\tau)E_{4,1}(\tau,z)=0,
$$
because $J_{8,1}^{hol}=\Bbb C E_4(\tau)E_{4,1}(\tau,z)$.
See \cite{K} for more solutions in index $1$ for different $k$.
We recall that we use factor $12$ before the differential operator $D$ 
in the modular correction $H_{k}$ of the heat operator in (\ref{heat}).

In this section we show that many  basic  generators of the bigraded 
ring $J_{*,*}$ of weak Jacobi forms satisfy differential 
equations of Kaneko--Zagier type with the heat operator $H$ instead of 
the derivation  $D=\frac 1{2\pi i }\frac{d\ }{d\tau}$.

\subsection{The Kaneko--Zagier type equation for $\varphi_{-2,1}$ or $\vartheta^2(\tau, z)$, and $\vartheta(\tau, z)\vartheta(\tau, 2z)$.}
In this subsection we consider two  examples of solutions  of Kaneko--Zagier type equations for Jacobi modular forms.

The Jacobi theta-series 
$
\vartheta(\tau,z)\in J_{\frac 12, \frac 12}^{hol}(v_{\eta^3}\cdot v_H)
$  
lies in the kernel of the heat operator:
$$
\left( 4\pi i \dfrac{\partial\ }{\partial \tau}-
\dfrac{\partial^2 \  }{\partial z^2}\right)\vartheta(\tau, z)=0.
$$
In the proof of Theorem \ref{deq:dim1} we obtained 
a formula for $H_0H_{-2}(\varphi_{-2,1})$  in (\ref{Hphi}). By (\ref{Hphi}),
\begin{equation}\label{eq:phi-21}
H_0H_{-2}(\varphi_{-2,1})-\frac 54 E_4\varphi_{-2,1}=0.
\end{equation}
Using Proposition \ref{prop:DH} we can rewrite the last equation   
in terms of 
$\vartheta^2$ (or in terms of  the first Jacobi cusp form of index one 
$\varphi_{10,1}=\eta^{18}(\tau)\vartheta^2(\tau,z)$ as in \cite{K})
\begin{equation}\label{eq:phi101}
H_3H_{1}(\vartheta^2)-\frac 54 E_4\vartheta^2=0\quad 
{\rm or}\quad  
H_3H_{1}(\varphi_{10,1})-\frac 54 E_4\varphi_{10,1}=0.
\end{equation}
Let us compare the last equation with the equation for the elliptic  genus of a $K3$ surface. (\ref{deq:K3}) is equivalent to  the equation of degree $3$ for the second Jacobi cusp form 
$\varphi_{12,1}=\Delta\varphi_{0,1}$ of index $1$
\begin{equation*}\label{deq:phi121}
H_{16}H_{14}H_{12}(\varphi_{12,1})-\frac{101}4E_4H_{12}(\varphi_{12,1})+10E_6\varphi_{12,1}=0.
\end{equation*}

Our second example is related to the product of two generators
(see (\ref{phi32})) of the bigraded ring of weak Jacobi forms 
$$
\varphi_{-2,1}(\tau, z)\cdot\varphi_{0,\frac 32}(\tau, z)=
\frac{\vartheta(\tau,z)\vartheta(\tau,2z)}{\eta^6(\tau)}\in 
J_{-2,\frac 52}.
$$
According to the structure results (\ref{eq:structure1})--(\ref{eq:structure2}) 
$$
J_{2,\frac 52}=\varphi_{0,\frac 32}J_{2,1}=\Bbb C E_4\varphi_{-2,1}\varphi_{0,\frac 32}.
$$
We have calculated $H_{-2}(\varphi_{-2,1}\varphi_{0,\frac 32})$
and  $H_0(\varphi_{0,\frac 52})$ in (\ref{H0phi52}). It gives us 
the equation for  the product $\varphi_{-2,1}\varphi_{0,\frac 32}$
which can be rewritten in terms of Jacobi theta-series
\begin{equation}
H_3H_1(\vartheta(\tau,z)\vartheta(\tau,2z))
-\frac{11}{25}E_4(\tau)\cdot(\vartheta(\tau,z)\vartheta(\tau,2z))=0.
\end{equation}

\subsection{Equations for $\vartheta^3(\tau, z)$ and 
$\vartheta^2(\tau, z)\vartheta(\tau, 2z)$.} It is interesting that the cube of the Jacobi theta-series satisfies an equation of Kaneko--Zagier type. To obtain it  we consider the product of two weak Jacobi forms 
$\varphi_{-1,\frac 12}\varphi_{-2,1}
={\vartheta^3}/{\eta^9}\in J_{-3,\frac 32}$.
We see that
$$
J_{1, \frac 32}=\varphi_{-1,\frac 12}J_{2,1}
=\Bbb C E_4\varphi_{-1,\frac 12}  \varphi_{-2,1}.
$$
Then we get the following identities
$$
H_{-3}(\varphi_{-1,\frac 12}\varphi_{-2,1})=
-\varphi_{-1,\frac 12}\varphi_{0,1}, \quad
H_{-1}(\varphi_{-1,\frac 12}\varphi_{0,1})=
-3E_4\varphi_{-1,\frac 12}\varphi_{-2,1}.
$$
Therefore we obtain an equation for the product 
$\varphi_{-1,\frac 12}\varphi_{-2,1}$ which we can rewrite  as 
\begin{equation}
H_{\frac 52}H_{\frac 32}(\vartheta^3(\tau,z))
-3E_4(\tau)\vartheta^3(\tau,z)=0.
\end{equation}

The next example is given by  the product 
$$\varphi_{-1,2}\varphi_{-2,1}
={\vartheta(\tau,2z)\vartheta^2(\tau,z)}/{\eta^9(\tau)}.$$
We have
$$
H_{-3}(\varphi_{-1,2}\varphi_{-2,1})=
-\frac 12\varphi_{-1,2}\varphi_{0,1},\quad
H_{-1}(\varphi_{-1,2}\varphi_{0,1})=
-\frac 52 E_4\varphi_{-1,2}\varphi_{-2,1}.
$$
Therefore 
\begin{equation}
H_{\frac 72}H_{\frac 32}(\vartheta(\tau,2z)\vartheta^2(\tau,z))-
\frac 54 E_4(\tau)\cdot (\vartheta(\tau,2z)\vartheta^2(\tau,z))=0.
\end{equation}

\subsection{Equation of degree $3$ for  $\vartheta^4(\tau, z)$.} 
We consider the fourth power of the Jacobi theta-series.  
To find a modular 
differential equation of this holomorphic Jacobi form we shall work 
with the square of the basic weak Jacobi form $ \varphi_{-2,1}$. The method 
of $q^0$-cancellation reduces calculations with Jacobi forms to 
simple manipulations with reciprocal polynomials.

\begin{theorem}
The fourth power $\vartheta^4(\tau, z)$ of the Jacobi theta-series
satisfies a modular differential equation of degree $3$
\begin{equation}\label{theta4}
H_6H_{4}H_{1}(\vartheta^4)- 
\frac{23}{4} E_4 H_{2}(\vartheta^4) 
+\frac{81}{4} E_6 \vartheta^4=0.
\end{equation}
\end{theorem}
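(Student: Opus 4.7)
The plan is to reduce the statement to an equation for the weak Jacobi form $\varphi_{-2,1}^2\in J_{-4,2}$ and then apply the $q^0$-cancellation technique used in the proof of Theorem \ref{deq:dim1}. From \eqref{phi-21} one has $\vartheta^4 = \eta^{12}\varphi_{-2,1}^2$, and iterating Proposition \ref{prop:DH} (using $H_{k+n/2}(\eta^n \varphi_{k,m}) = \eta^n H_k(\varphi_{k,m})$) shows that \eqref{theta4} is equivalent, after factoring out $\eta^{12}$, to the identity
\begin{equation*}
H_0 H_{-2} H_{-4}(\varphi_{-2,1}^2) - \tfrac{23}{4}\, E_4\, H_{-4}(\varphi_{-2,1}^2) + \tfrac{81}{4}\, E_6\, \varphi_{-2,1}^2 = 0
\end{equation*}
in $J_{2,2}$.

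A dimension count via the structure result \eqref{J2**} shows $J_{2,2}$ is two-dimensional with basis $E_4\varphi_{-2,1}\varphi_{0,1}$ and $E_6\varphi_{-2,1}^2$ (indices $d+e=2$, weight $4a+6b-2d=2$ leaves only these two monomials), so the three summands above are automatically linearly dependent and only the coefficients need to be pinned down. For that I would compute the $q^0$-Fourier coefficient of each summand by applying formula \eqref{q0-coef} iteratively. Starting from $q^0(\varphi_{-2,1}^2)=\zeta^{\pm 2}-4\zeta^{\pm 1}+6$ and using weights $-4, -2, 0$ in successive applications, I obtain explicit Laurent polynomials in $\zeta$ for $q^0(H_{-4}\phi)$, $q^0(H_{-2}H_{-4}\phi)$, and $q^0(H_0 H_{-2} H_{-4}\phi)$, where $\phi := \varphi_{-2,1}^2$. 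Requiring the $q^0$-Fourier coefficient of $a\cdot H_0 H_{-2} H_{-4}\phi + b\cdot E_4 H_{-4}\phi + c\cdot E_6\phi$ to vanish in each of the components $\zeta^{\pm 2},\zeta^{\pm 1},\zeta^0$ yields a homogeneous $3\times 3$ system, which is rank two (as it must be, by the dimension count), and whose one-dimensional kernel is spanned by $(1,-\tfrac{23}{4},\tfrac{81}{4})$.

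To finish, I invoke the same vanishing argument as in the proof of Theorem \ref{deq:dim1}: once the $q^0$-coefficient of the combination is zero, the combination lies in $\Delta\cdot J_{-10,2}$, but $J_{-10,2}=0$ because $\varphi_{-2,1}^2$ already realizes the lowest weight $-4$ in index $2$ within the polynomial ring $\mathbb{C}[E_4,E_6,\varphi_{-2,1},\varphi_{0,1}]$. The only genuine obstacle is carrying out the iterated $q^0$-computation cleanly, which demands careful bookkeeping of the weight $k$ at each application of $H_k$; the coefficient of $\zeta^{\pm 2}$ must also be tracked, since without it the linear system would be underdetermined and the ratio $23:81$ could not be isolated.
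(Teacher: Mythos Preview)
Your proposal is correct and follows essentially the same route as the paper: reduce to $\varphi_{-2,1}^2$ via Proposition~\ref{prop:DH}, observe that $J_{2,2}$ is two-dimensional, and determine the linear relation among $H_0H_{-2}H_{-4}(\varphi_{-2,1}^2)$, $E_4H_{-4}(\varphi_{-2,1}^2)$, and $E_6\varphi_{-2,1}^2$ by comparing $q^0$-coefficients. The paper's version is marginally leaner in that it uses the dimension count directly (three vectors in a two-dimensional space must be dependent) and then reads off the relation from just the $\zeta^{\pm 2}$ and constant $q^0$-coefficients, so your separate appeal to $J_{-10,2}=\{0\}$ is redundant, though not wrong.
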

\begin{proof}
We analyze the Jacobi form 
$\varphi_{-2,1}^2=\vartheta^4/\eta^{12}$.
We note that $J_{-2n,n}=\Bbb C \varphi_{-2,1}^n$
and $J_{-2n+2,n}=\Bbb C \varphi_{-2,1}^{n-1}\varphi_{0,1}$.
Using (\ref{heat}) we get  that for any $n\ge 2$
$$
H_{-2n}(\varphi_{-2,1}^n)
=(-n+\frac{1}2)\varphi_{-2,1}^{n-1}\varphi_{0,1}.
$$
For the third iteration of the modular differential operator we have 
$$
\psi_{2,2}=H_0H_{-2}H_{-4}(\varphi_{-2,1}^2)\in J_{2,2}
=\langle E_4\varphi_{-2,1}^2,\, 
\varphi_{-2,1}\varphi_{0,1}\rangle_\mathbb{C}.
$$
It follows that 
$H_0H_{-2}H_{-4}(\varphi_{-2,1}^2)$ is a linear combination of 
$E_4H_{-4}(\varphi_{-2,1}^2)$ and $E_6\varphi_{-2,1}^2$.
To compute the constants in the equation we find two coefficients of 
the  $q^0$-term of $\psi_{2,2}$ using (\ref{q0-coef})
$$
q^0(\psi_{2,2})=-\frac{3\cdot 7\cdot 11}8\zeta^2+\ldots+
\frac{9\cdot 5\cdot 1}{8}+\dots.
$$
As a result we obtain
$$
H_0H_{-2}H_{-4}(\phi_{-2,1}^2)=
\frac{23}{4} E_4 H_{-4}(\phi_{-2,1}^2) -\frac{81}{4} E_6\phi_{-2,1}^2.
$$
To finish the proof we apply Proposition \ref{prop:DH}.
\end{proof}

\noindent
{\bf Remark.} We note that $\vartheta^8(\tau, z)$ can be considered as a special Jacobi-Eisenstein series. In particular, there is a formula for its Fourier coefficients (see \cite{GW18}). We can show that 
$\vartheta^4(\tau,z)$ is a Jacobi-Eisenstein series with a nontrivial character of the full Jacobi group.

\section{$CY_4$, $CY_6$ and generating functions of Kac-Moody type}

\subsection{Modular differential equation for generic Jacobi forms of 
index $2$ and $3$}
We know that
$$
\begin{aligned}
\chi(CY_4;\tau, z)&\in J_{0,2}^{\Bbb Z}
=\,\langle E_4\varphi_{-2,1}^2,\, \varphi_{0,1}^2 \rangle_\Bbb Z\,
=\, \langle E_4\varphi_{-2,1}^2,\,\varphi_{0,2}\rangle_\Bbb Z,\\
\chi(CY_6;\tau, z)&\in J_{0,3}^{\Bbb Z}
=\, \langle E_6\varphi_{-2,1}^3,\,E_4\varphi_{-2,1}^2\varphi_{0,1},\,
\varphi_{0,1}^3\rangle_\Bbb Z.
\end{aligned}
$$

We analyze the algorithm of $q^0$-cancellation for weak Jacobi 
forms of index $2$.
There are four parameters in an equation of order $5$ in $H$
\begin{multline}\label{deg5}
H_8H_6H_4H_2H_0\phi_{0,2}+
aE_4H_4H_2H_0\phi_{0,2}\\+bE_6H_2H_0\phi_{0,2}
+cE_8H_0\phi_{0,2}+dE_{10}\phi_{0,2}=0
\end{multline}
for an arbitrary Jacobi form $\phi_{0,2}\in J_{0,2}$ of index $2$. 
The $q^0$-part of the Jacobi form  
$H_8H_6H_4H_2H_0(\phi_{0,2})\in J_{10,2}$
has three coefficients
$s_2\zeta^{\pm 2}+s_1\zeta^{\pm 1}+s_0$.
Using three parameters in (\ref{deg5}) one can obtain a Jacobi form 
$\xi_{10,2}\in J_{10,2}(q)$ such that  $q^0(\xi_{10,2})=0$.
Therefore 
$\Delta^{-1}\xi_{10,2}\in J_{-2,2}=\Bbb C\varphi_{-2,1}\varphi_{0,1}$.
Using the fourth parameter in the differential equation one can
annihilate the last term $\xi_{10,2}$. We have proved the following 
result.

\begin{theorem}\label{Thm:CY4}
An arbitrary Jacobi form $\phi_{0,2}$ of weight $0$ and index $2$ 
(resp. $\phi_{0,3}$ of index $3$) satisfies a linear (in the heat 
operator $H$) modular differential equation of order $5$ (resp. $7$).
\end{theorem}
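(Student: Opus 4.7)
The plan is to implement the $q^0$-cancellation strategy sketched in the paragraph preceding the theorem, uniformly for both cases. Set $(n,m)=(5,2)$ or $(7,3)$ and consider the ansatz
\begin{equation*}
\Psi \;=\; H_{2n-2}\cdots H_2 H_0\,\phi_{0,m} \;+\; \sum_{j=2}^{n} f_{2j}(\tau)\,H_{2n-2j-2}\cdots H_2 H_0\,\phi_{0,m},
\end{equation*}
where each $f_{2j}\in M_{2j}(SL_2(\Bbb Z))$ is unknown, the inner composition is read as the identity when $j=n$, and no $j=1$ term is present since $M_2(SL_2(\Bbb Z))=0$. Every summand lies in $J_{2n,m}$. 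A monomial count in $E_4,E_6,\varphi_{-2,1},\varphi_{0,1}$ based on (\ref{J2**}) gives $\dim J_{10,2}=4$ and $\dim J_{14,3}=7$. The total number of scalar parameters is $\sum_{j=2}^{n}\dim M_{2j}$, which equals $1{+}1{+}1{+}1=4$ for $n=5$ and $1{+}1{+}1{+}1{+}2{+}1=7$ for $n=7$, the extra dimension in weight $12$ being the cusp form $\Delta$. So parameter count and target dimension match exactly in both cases.

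The reduction proceeds in two steps. Since $E_{2j}=1+O(q)$, the $q^0$-part of each block $E_{2j}\cdot H_{2n-2j-2}\cdots H_0\phi_{0,m}$ equals that of $H_{2n-2j-2}\cdots H_0\phi_{0,m}$, which by iterating (\ref{q0-coef}) is a Laurent polynomial in $\zeta$ explicitly computable from $q^0(\phi_{0,m})$; the $\Delta$-multiplied block contributes nothing to $q^0$. Since weight-$0$ Jacobi forms are even in $z$, $q^0(\Psi)$ has $m+1$ independent $\zeta$-coefficients, so $q^0(\Psi)=0$ gives $m+1$ linear conditions on the parameters (three for index $2$, four for index $3$). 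The residual form then lies in $J_{2n,m}(q)=\Delta\cdot J_{2n-12,m}$, and another monomial enumeration gives $\dim J_{-2,2}=1$ (spanned by $\varphi_{-2,1}\varphi_{0,1}$) and $\dim J_{2,3}=3$, so the vanishing of the residual adds $\dim J_{2n-12,m}$ further conditions. The totals $3+1=4$ and $4+3=7$ match the parameter counts.

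The remaining and principal obstacle is to show that the resulting square linear system is solvable, i.e.\ that $-H_{2n-2}\cdots H_0\phi_{0,m}$ lies in the span of the $N=\dim J_{2n,m}$ building blocks $E_{2j}\cdot H_{2n-2j-2}\cdots H_0\phi_{0,m}$, augmented by $\Delta\cdot H_0\phi_{0,3}$ in the index-$3$ case. For generic $\phi_{0,m}$ this spanning is immediate from the two-stage filtration above: the scalar multipliers $-3l^2/m-(2k-1)/2$ appearing in (\ref{q0-coef}) are pairwise distinct across the relevant pairs $(k,l)$, so the non-$\Delta$ blocks become linearly independent modulo $\Delta\cdot J_{2n-12,m}$, and the $\Delta$-blocks fill in the kernel to form a basis. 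For a degenerate $\phi_{0,m}$ whose vector of $a(0,l)$ is too sparse for the $q^0$-reduction to be injective, one either reduces by continuity to the generic case, or observes that the building blocks themselves become dependent and the resulting relation already furnishes an equation of order at most $n$, which is all the theorem claims.
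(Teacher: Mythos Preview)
Your two-stage parameter count (kill the $q^0$-part, then the residual in $\Delta\cdot J_{2n-12,m}$) reproduces the paper's argument precisely, and all your dimension tallies are correct.

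The defect is in your final paragraph, where you try to justify solvability of the square system. The assertion that ``the non-$\Delta$ blocks become linearly independent modulo $\Delta\cdot J_{2n-12,m}$'' is false on dimensional grounds: for $(n,m)=(5,2)$ there are four such blocks but $\dim\bigl(J_{10,2}/\Delta J_{-2,2}\bigr)=3$; for $(n,m)=(7,3)$ there are six against a quotient of dimension $4$. A Vandermonde argument does show that the $q^0$-images of these blocks \emph{span} the quotient when all $a(0,l)\ne 0$, but that only gives $\dim\operatorname{span}\{\text{blocks}\}\ge m+1$, not the full $\dim J_{2n,m}$ you need; and for $m=2$ there are no $\Delta$-blocks at all, so nothing remains to ``fill in the kernel''. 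Your continuity fallback is also unsafe, since the condition $v_0\in\operatorname{span}\{v_1,\dots,v_N\}$ is not a closed condition on $\phi$ when the rank of $\{v_1,\dots,v_N\}$ can drop in the limit. And your second fallback---a dependence among the lower blocks alone---produces a relation with leading term $E_4\,H_{2n-6}\cdots H_0\phi$, which is order $n-2$ with a non-constant leading coefficient, not the monic order-$n$ equation of the form \eqref{deg5} that the theorem asserts. In fairness, the paper's own proof simply declares that the remaining parameters ``can annihilate'' the residual without verification, so you have correctly located a genuine soft spot and gone further than the paper toward addressing it; but your attempted repair introduces a concrete false statement rather than closing the gap.
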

\begin{proof}We have to prove the theorem for index $3$.
Using the same arguments as in the case of index $2$ we can construct a 
differential equation of degree $7$ for any weak Jacobi form 
$\phi_{0,3}\in J_{0,3}$. The $q^0$-Fourier coefficient  of the Jacobi 
form $H_{12}H_{10}\ldots H_0(\phi_{0,3})\in J_{14,3}$ 
has four coefficients.
A differential equation of degree $7$ for any $\phi_{0,3}$
contains  the dominant term indicated above and $6$ additional summands 
defined by the corresponding iterations of $H$. The coefficient at
$H_0\phi_{0,3}$ is a modular form of weight $12$. Therefore, there are 
$7$ free coefficients in the equation at modular factors  $E_4$, $E_6$, 
$E_8$, $E_{10}$, $E_{12}$, $\Delta$ and $E_{14}$. Using four 
coefficients one can get a modular form 
$\xi_{14,3}\in J_{14,3}(q)$ without $q^0$-part.
Thus,
$$
\Delta^{-1}\xi_{14,3}\in J_{2,3}=
\langle E_8\varphi_{-2,1}^3,\, E_6\varphi_{-2.1}^2\varphi_{0,1},\,
E_4\varphi_{-2.1}\varphi_{0,1}^2 \rangle_\Bbb C.
$$
With three additional parameters in the equation one can annihilate 
$\xi_{14,3}$. In this way we obtain a modular differential equation of 
degree $7$.
\end{proof}

\subsection{Reflective weak Jacobi forms of index $2$, $3$ and $4$.}
We will consider high-order modular differential equations in 
another paper. Now we analyze special differential equations  for 
the generators of the graded ring $J_{0,*}^{\Bbb Z}$ 
of weak Jacobi forms of weight $0$ and integral index.
It is the target ring for the elliptic genus of Calabi--Yau varieties 
of even dimensions according Theorem \ref{Th:EG}. 
The ring has four generators $\varphi_{0,1}$, $\varphi_{0,2}$,
$\varphi_{0,3}$, $\varphi_{0,4}$ according to Theorem \ref{thm:J0*}.
These Jacobi forms have many applications.
They are reflective, i.e., they determine Siegel 
paramodular forms with the simplest divisors which are generating 
functions of the basic Lorentzian Kac-Moody algebras of hyperbolic rank 
$3$ (see \cite{GN98}). In particular, these Jacobi forms are generating 
functions for  (super)-multiplicities  of the positive roots of the 
corresponding  Lorentzian Kac-Moody algebras.
Moreover, the $-1$-power of the automorphic Borcherds product 
defined by 
$2\varphi_{0,1}$ is the second quantized elliptic genus of K3 surface 
(see \cite{DMVV}).
The differential equation for  $\varphi_{0,1}$ was considered in 
Theorem \ref{deq:dim1}. 
It turns out that $\varphi_{0,2}$ and $\varphi_{0,4}$ also satisfy 
modular differential equations of degree $3$. 
The generator $\varphi_{0,3}$
gives us the first example of weak Jacobi forms of weight $0$ whose 
differential equation has degree $4$.
\smallskip 

The first step in the construction of a modular differential equation 
in Theorem \ref{Thm:CY4} is the annihilation of the $q^0$-part of a Jacobi  form. There exist three Jacobi forms 
in $J_{0,2}^\Bbb Z$ 
with only two parameters in the $q^0$-Fourier coefficient 
$$
\begin{aligned}
\varphi_{0,2}(\tau,z)&=\zeta^{\pm 1}+4+q(\ldots)\quad ({\rm see}\ 
(\ref{phi2})),\\
\psi_{0,2}(\tau,z)&=\zeta^{\pm 2}+22+q(\ldots)
=\varphi_{0,1}^2(\tau,z)-20\varphi_{0,2}(\tau,z),\\
\rho_{0,2}(\tau,z)&=
2\psi_{0,2}(\tau,z)-11\varphi_{0,2}(\tau,z)=2\zeta^{\pm 2}-11\zeta^{\pm 1}+q(\ldots).
\end{aligned}
$$
For them we can expect a special modular differential equation of order 
less than $5$. The Jacobi form $\psi_{0,2}(\tau,z)$ is also reflective 
and determines a Lorentzian Kac-Moody algebra. We show that each of 
them satisfies a modular differential equation of order $3$. 

First, consider the generator $\varphi_{0,2}$. 
We have $H_0(\varphi_{0,2})=-\zeta^{\pm 1}+2+q(\ldots)$.
Then, continuing to apply the modular differential operators, we get
$$
H_2H_0(\varphi_{0,2})=3\zeta^{\pm 1}-3+q(\ldots), \quad 
H_4H_2H_0(\varphi_{0,2})=-15\zeta^{\pm 1}+\frac{21}{2}+q(\ldots).
$$
From these formulas we obtain  
$$
\xi_{6,2}=4H_4H_2H_0(\varphi_{0,2})- 47E_4H_0(\varphi_{0,2})+13E_6\varphi_{0,2}=q(\ldots)\in J_{6,2}(q).
$$
It follows that $\Delta^{-1}\xi_{6,2}\in J_{-6,2}=\{0\}$
according to the structure of the algebra of Jacobi forms (see (\ref{J2**})). Therefore  $\varphi_{0,2}$ satisfies the equation
\begin{equation}\label{deq:phi02}
H_4H_2H_0(\varphi_{0,2})-\frac{47}4 E_4H_0(\varphi_{0,2})+ \frac{13}4E_6\varphi_{0,2}=0.
\end{equation}
In a similar way we obtain
$$
\begin{aligned}
H_0(\psi_{0,2})=-\frac{11}{2}\zeta^{\pm 2}+11+q(\ldots),\ 
H_2H_0(\psi_{0,2})&
=\frac{165}{4}\zeta^{\pm 2}-\frac{33}{2}+q(\ldots),
\\
H_4H_2H_0(\psi_{0,2})=-\frac{3135}{8}\zeta^{\pm 2}
+\frac{231}{8}+q(\ldots).
\end{aligned}
$$
Therefore
\begin{equation}\label{deq:psi02}
H_4H_2H_0(\psi_{0,2})-\frac{263}4E_4H_0(\psi_{0,2})
+\frac{121}4E_6\psi_{0,2}=0.
\end{equation}
For the third Jacobi form $\rho_{0,2}$ we have 
$$
H_0(\rho_{0,2})=-11\zeta^{\pm 2}+11\zeta^{\pm 1}+q(\ldots),\quad
H_2H_0(\rho_{0,2})=\frac{165}{2}\zeta^{\pm 2}-33\zeta^{\pm 1}+q(\ldots)
$$
 and 
$H_4H_2H_0(\rho_{0,2})=-\frac{3135}{4}\zeta^{\pm 2}+165\zeta^{\pm 1}+q(\ldots)$. 
Therefore
\begin{equation}\label{deq:rho02}
H_4H_2H_0(\rho_{0,2})-\frac{335}4 E_4H_0(\rho_{0,2})-\frac{275}4E_6\rho_{0,2}=0.
\end{equation}

For the last two generators $\varphi_{0,3}$ and $\varphi_{0,4}$ the 
situation is more complicated. 
We have to analyze not only the $q^0$-term, but a part
of $q^1$-Fourier coefficients. In particular, there are no modular 
differential equations of order $3$ for $\varphi_{0,3}$. 

\begin{theorem}\label{thm:deq-phi03}
The generators $\varphi_{0,3}$, $\varphi_{0,4}$ 
(see (\ref{phi2})) satisfy the following modular 
differential equations of order $4$ and $3$:
$$
H_6H_4H_2H_0(\varphi_{0,3})-\frac{29}{2}E_4H_2H_0(\varphi_{0,3})
+22E_6H_0(\varphi_{0,3})-\frac{119}{16}E_8\varphi_{0,3}=0
$$
and 
\begin{equation}\label{deq:phi04}
H_4H_2H_0(\varphi_{0,4})-\frac{107}{16}E_4H_0(\varphi_{0,4})
+\frac{23}{32}E_6\varphi_{0,4}=0.
\end{equation}
\end{theorem}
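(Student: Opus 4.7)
The plan is to prove both equations by extending the $q^0$-cancellation method of Theorems~\ref{deq:dim1} and~\ref{Thm:CY4}, supplemented by the analysis of one additional $q^1$-Fourier coefficient in each case. In both cases I would iterate $H$ the requisite number of times, form the generic linear combination with Eisenstein factors of matching weight, and then extract a linear system for the unknown coefficients.

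\textbf{Equation for $\varphi_{0,4}$.} I would iterate the heat operator three times to land in $J_{6,4}$ and form the candidate
\[
\xi_{6,4}(a,b)=H_4H_2H_0(\varphi_{0,4})+a\,E_4\,H_0(\varphi_{0,4})+b\,E_6\,\varphi_{0,4}.
\]
By formula~(\ref{q0-coef}), since $q^0(\varphi_{0,4})=\zeta^{\pm 1}+1$, the $q^0$-part of every iterated heat operator is supported only at $\zeta^{\pm 1}$ and $\zeta^0$; cancelling these two coefficients of $\xi_{6,4}$ gives a $2\times 2$ linear system in $(a,b)$ with unique solution $(-107/16,\,23/32)$. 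By the structure theorem~(\ref{J2**}), $J_{-6,4}=\mathbb{C}\,\varphi_{-2,1}^3\varphi_{0,1}$ is one-dimensional, and its $q^0$-part has coefficient $1$ at $\zeta^{\pm 4}$. Hence $\xi_{6,4}/\Delta\in J_{-6,4}$ vanishes if and only if the $q^1$-coefficient of $\xi_{6,4}$ at $\zeta^{\pm 4}$ vanishes---a direct check using the $q^1$-expansions of $\varphi_{0,4}$, $E_4$, $E_6$ and of the iterated heat operators.

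\textbf{Equation for $\varphi_{0,3}$.} Four iterations produce $H_6H_4H_2H_0(\varphi_{0,3})\in J_{8,3}$, and I would set
\[
\xi_{8,3}(a,b,c)=H_6H_4H_2H_0(\varphi_{0,3})+a\,E_4H_2H_0(\varphi_{0,3})+b\,E_6H_0(\varphi_{0,3})+c\,E_8\,\varphi_{0,3}.
\]
Since $q^0(\varphi_{0,3})=\zeta^{\pm 1}+2$, the $q^0$-cancellation yields only two linear equations on the three parameters, leaving a one-parameter family with $\xi_{8,3}/\Delta\in J_{-4,3}=\mathbb{C}\,\varphi_{-2,1}^2\varphi_{0,1}$ (one-dimensional by the structure theorem). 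The basis element has $q^0$-coefficient $1$ at $\zeta^{\pm 3}$, so the scalar class of $\xi_{8,3}/\Delta$ is detected by the $q^1$-coefficient of $\xi_{8,3}$ at $\zeta^{\pm 3}$; setting it to zero is the third linear equation, and the $3\times 3$ system then has unique solution $(-29/2,\,22,\,-119/16)$.

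The main obstacle is the bookkeeping: every iterated heat operator must be expanded through order $q^1$, and Fourier coefficients at $\zeta^{\pm 3}$ (respectively $\zeta^{\pm 4}$) must be tracked carefully alongside the $q^1$-expansions of $E_4$, $E_6$, $E_8$. The truly delicate point is the $\varphi_{0,4}$ case: the pair $(a,b)$ forced by $q^0$-cancellation must \emph{also} annihilate the $q^1$-coefficient at $\zeta^{\pm 4}$, which is a genuine computational coincidence allowing the degree to drop from $4$ to $3$ despite $J_{-6,4}\ne 0$. For $\varphi_{0,3}$ no such coincidence is involved, since the extra free parameter naturally absorbs the $q^1$-condition.
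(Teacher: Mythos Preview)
Your proposal is correct and follows essentially the same route as the paper: $q^0$-cancellation to fix (all but one of) the Eisenstein coefficients, followed by a single $q^1$-coefficient check against the one-dimensional obstruction spaces $J_{-4,3}=\mathbb{C}\,\varphi_{-2,1}^2\varphi_{0,1}$ and $J_{-6,4}=\mathbb{C}\,\varphi_{-2,1}^3\varphi_{0,1}$. The paper additionally records the failed order-$3$ attempt for $\varphi_{0,3}$ (obtaining the explicit obstruction $60\,\Delta\varphi_{-2,1}^3$), but since the theorem as stated only asserts that the displayed equations hold, your direct passage to order $4$ suffices.
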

\begin{proof} 
In Theorem \ref{Thm:CY4} we proved that a general weak Jacobi 
form in $J_{0,3}$ satisfies a modular differential equation of order 
$7$. The generator $\varphi_{0,3}=\zeta^{\pm 1}+2+q(\ldots)$ 
has the simplest $q^0$-term. 
To annihilate it one needs two parameters in an equation of 
order $3$. See the proof of the equations 
(\ref{deq:phi02})--(\ref{deq:rho02}). In the case of index $3$
the subspace of the weak Jacobi forms of weight $6$ and index $3$ 
without $q^0$-Fourier coefficients  
is one-dimensional: $J_{6,3}(q)=\mathbb{C}\Delta\varphi_{-2,1}^3$.
 This is the first case when we need to control 
the $q^1$-term of Fourier expansions. We see that 
$$
\begin{aligned}
H_0(\varphi_{0,3})&=-\frac 12\zeta^{\pm 1}\ +\ 1\ 
+\ q\,\left(-7\zeta^{\pm 3}+\ldots\right),\\
H_2H_0(\varphi_{0,3})&=\ \ \frac 54\zeta^{\pm 1}\ -\ \frac 32\ 
+\ q\,\left(-\frac{21}2\zeta^{\pm 3}+\ldots\right),\\
H_4H_2H_0(\varphi_{0,3})&=-\frac {45}8\zeta^{\pm 1}+\frac {21}4\,
+\ q\,\left(\ \frac{21}4\zeta^{\pm 3}+\ldots\right).
\end{aligned}
$$
Therefore we obtain the identity
$$
H_4H_2H_0(\varphi_{0,3})-\frac{33}4 E_4H_0(\varphi_{0,3})
+\frac{3}2 E_6\varphi_{0,3}=60\,\Delta\varphi_{-2,1}^3.
$$
In this case we have  to analyze  equations of order $4$ with the dominant term $H_6H_4H_2H_0(\varphi_{0,3})\in J_{8,3}$.
We note that the subspace  
$J_{8,3}(q)=\Delta J_{-4,3}=\Bbb C \Delta\varphi_{-2,1}^2\varphi_{0,1}$ 
is again one-dimensional. Therefore, it is enough to control only one 
coefficient in $q^1$-terms. We use three formulas above and  the fourth  identity
$$
H_6H_4H_2H_0(\varphi_{0,3})=\frac {585}{16}\zeta^{\pm 1}
-\frac {231}8
+q\,\left(-\frac{105}8\zeta^{\pm 3}+\ldots\right)
$$
in order to find the equation of the theorem for $\varphi_{0,3}$.

Consider the last generator 
$\varphi_{0,4}=\frac{\vartheta(\tau, 3z)}{\vartheta(\tau, z)}$ of the graded ring $J_{0,*}^{\Bbb Z}$. The direct computation shows that the $q^0$-term of the left hand side of (\ref{deq:phi04}) is equal to zero. The subspace
$J_{6,4}(q)=\Delta J_{-6,4}=\mathbb{C} \Delta\varphi_{-2,1}^3\varphi_{0,1}$ is again one-dimensional.  But in this case of $\varphi_{0,4}$ we are lucky. Direct computation shows that the  $q^1$-part of the Fourier expansion in (\ref{deq:phi04}) vanishes. 
Therefore  $\varphi_{0,4}$ satisfies the equation
(\ref{deq:phi04}) of degree $3$.
\end{proof}
\bigskip

\noindent
\textbf{Acknowledgements.}
The first author was supported by Ministry of Science and Higher Education of the Russian Federation, agreement  075--15--2022--287 and by the M\"obius Contest Foundation for Young Scientists.
The second author was supported by the HSE University Basic Research Program and by the PRCI SMAGP (ANR-20-CE40-0026-01).

\bibliographystyle{amsplain}

\end{document}